\theoremstyle{plain}
\newtheorem{thm}{Theorem}[section]
\newtheorem{prop}{Proposition}[section]
\theoremstyle{definition}
\newtheorem{dfn}{Definition}[section]
\numberwithin{equation}{section}
\begin{document}

\author{Vladimir Bo\v zin,  Miodrag Mateljev\textrm{i\'{c}}}
\address{Faculty of mathematics, Univ. of Belgrade, Studentski Trg 16, Belgrade, Serbia}
\email{\rm bozin@matf.bg.ac.rs}
\email{\rm miodrag@matf.bg.ac.rs}
\title[Bounds
for  Jacobian]{Bounds for Jacobian of harmonic injective
mappings in n-dimensional space}

\maketitle

\thanks{Research partially supported by MNTRS, Serbia,  Grant No.
174 032}

\bigskip
\bigskip

\begin{abstract}
Using normal family arguments, we show that the degree of the first nonzero homogenous polynomial in the expansion of $n$ dimensional Euclidean harmonic $K$-quasiconformal mapping
around an
internal point is odd, and that
such a map
from the unit ball onto  a
bounded convex  domain,  with  $K< 3^{n-1}$,
is  co-Lipschitz. Also some generalizations  of this result are given, as well as a generalization of Heinz's lemma for harmonic quasiconformal maps in $\mathbb R^n$ and related results.

\end{abstract}
\section{Introduction}

In his seminal paper, Olli Martio \cite{OM1} observed that every
quasiconformal harmonic mapping of the unit planar disk onto
itself is co-Lipschitz. Later, the subject of quasiconformal
harmonic mappings was intensively studied by the participants of
the Belgrade Analysis Seminar, see for example \cite{kalaj.thesis, mama,
mm.fil12a,topic, rckm0,MP}. Harmonic quasiconformal maps have found applications in Teichm\" uller theory, among other things.
Recently, V. Markovi\' c  \cite{mar} proved  that a quasiconformal map of
the sphere $\mathbb{S}^2$ admits a harmonic quasi-isometric
extension to the hyperbolic space $\mathbb{H}^3$, thus confirming
the well known Schoen Conjecture in dimension $3$.
Related questions of bi-Lipschitzity and bounds of Jacobian have been studied in a sequence of papers by Kalaj and Mateljevi\'c; see also a recent paper of
Iwaniec-Onninen  \cite{IwOn}. The corresponding results for
harmonic maps between surfaces were  obtained previously by  Jost
and Jost-Karcher  \cite{jost2,jost}.  In the planar case, the
complex  harmonic function $h$ on a simply connected planar domain
can be written   in the form    $h=f+\overline{g}$, where $f$ and
$g$ are   holomorphic, so that  $|f'| $  satisfies the minimum
principle  and Lewy's theorem.  There is no appropriate analogy in
higher dimensions; if $h$ is a harmonic mapping from a domain in
$\mathbb{R}^{n}$   to $\mathbb{R}^n$ then   $\|\partial_j h\| $ is
subharmonic,   but it does not satisfy the  minimum principle in
general. In fact,  Lewy's theorem is false in dimensions higer
than two (see \cite{wood}, also \cite{dur}  pp. 25-27 for Wood's
counterexample). In a very special case, gradients of harmonic
functions in $\mathbb{R}^3$, for which Lewy's theorem is true,
that also turn out to map unit ball onto a convex domain, are
known to be co-Lipschitz (see Astala-Manojlovi\' c \cite{ast.ma}, Mateljevi\'c \cite{rckm0}). However, it seems
that in general, one needs a different approach in higher
dimensions.

For example, in \cite{KaMpacific} the following general theorem was proved:
\begin{thm}
A $K$-quasiconformal harmonic mapping $f$ of the unit $n$ dimensional ball
 ($n>2$) onto itself is Euclidean
bi-Lipschitz, provided that $f(0) = 0$ and that $K<2^{n-1}$,
where $n$ is the dimension of the space.
\end{thm}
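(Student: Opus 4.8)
The plan is to reduce the bi-Lipschitz property to two-sided pointwise bounds on the Jacobian $J_f$ and then to produce the lower bound by a blow-up/normal-family argument. Since $f$ is $K$-quasiconformal we have, for the operator norm $\|Df\|$ and the smallest singular value $l(Df)$, the inequalities $\|Df\|^n \le K J_f$ and $J_f \le K\, l(Df)^n$; consequently a positive upper bound on $J_f$ yields the Lipschitz estimate and a positive lower bound on $J_f$ yields the co-Lipschitz estimate. The upper bound is the more standard half: each coordinate of $f$ is a bounded harmonic function, the target is convex, and $K$-quasiconformality makes the boundary correspondence quasisymmetric, so differentiating the Poisson representation and using this boundary regularity gives $\|Df\|\le C$ up to $\partial\mathbb{B}^n$. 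This direction is independent of the threshold $2^{n-1}$, so I would concentrate on the co-Lipschitz bound, i.e. on proving $\inf_{\mathbb{B}^n} J_f > 0$.

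First I would establish that $J_f$ is positive at every interior point. Fix $x_0\in\mathbb{B}^n$ and expand $f(x_0+h)-f(x_0)=\sum_{m\ge d}P_m(h)$ into homogeneous terms, each component of $P_m$ being a harmonic polynomial of degree $m$, with $P_d$ the first nonzero term. Because $f$ is a homeomorphism, $P_d$ is nonconstant, and since the dilatation $\|Df\|^n/J_f$ is scale invariant, letting $h=t\omega$ with $t\to0$ transfers the bound $\|Df\|^n\le K J_f$ to the leading map, so $P_d$ is $K$-quasiconformal wherever $J_{P_d}\neq0$; moreover injectivity of $f$ forces the topological degree of $P_d$ about the origin to be $\pm1$. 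The key computation is a lower bound for the dilatation of such a leading term: by Euler's relation $DP_d(x)\,x=d\,P_d(x)$ the map stretches radially by a factor of order $d$, while degree-one injectivity forbids the angular directions from winding to compensate, so the singular values split like $d:1:\cdots:1$ and $\|DP_d\|^n/J_{P_d}\ge d^{\,n-1}$, with the radial power map $x\mapsto|x|^{d-1}x$ as the extremal model. Hence $d\ge2$ would force $K\ge2^{n-1}$, contradicting $K<2^{n-1}$; therefore $d=1$ and $J_f(x_0)\neq0$. It is exactly here that the sharper, odd-degree information of the present paper replaces $d\ge2$ by $d\ge3$ and the threshold $2^{n-1}$ by $3^{n-1}$.

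Pointwise positivity is not yet a uniform bound, and the remaining and hardest step is to upgrade it by a normal-family argument. Suppose $\inf J_f=0$ and pick $x_k$ with $J_f(x_k)\to0$, equivalently $l(Df(x_k))\to0$. I would renormalize, setting $F_k(y)=T_k\big(f(x_k+r_k y)-f(x_k)\big)$ with a spatial scale $r_k\to0$ and a linear range-normalization $T_k$ chosen so that $DF_k(0)$ is normalized, and extract a locally uniform limit $F$: the family is precompact because harmonicity together with $K$-quasiconformality give interior equicontinuity and Harnack-type control, quasiconformality passes to the limit by lower semicontinuity of the dilatation, and $F$ is a nonconstant harmonic map with degenerate derivative at the origin. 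Applying the leading-term analysis of the previous paragraph to $F$ then yields the same contradiction. The main obstacles are three. One must guarantee that the limit $F$ is genuinely nonconstant and carries a well-defined homogeneous leading term of finite order, which is the delicate point and typically requires a monotonicity (frequency-function) argument to fix the scales $r_k,T_k$ so that no collapse occurs and the blow-up is homogeneous. One must also make rigorous the dilatation lemma $K\ge d^{\,n-1}$ under the additional harmonic constraint. Finally, when $x_k\to\partial\mathbb{B}^n$ the rescaled domains exhaust a half-space rather than $\mathbb{R}^n$, so $F$ becomes a harmonic quasiconformal map of a half-space and one must use the normalization $f(0)=0$ together with convexity of the target to control the boundary geometry and exclude degeneration there. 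Once the uniform lower bound is secured, combining it with the Lipschitz upper bound of the first paragraph gives the bi-Lipschitz conclusion.
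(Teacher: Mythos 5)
Your outline is closer in spirit to the machinery of this paper than to the proof of the quoted theorem itself: in the paper this statement is cited from Kalaj--Mateljevi\'c \cite{KaMpacific}, whose proof runs through M\"obius transformations of the ball together with Kalaj's a priori estimate \cite{Ka.janal13} that harmonic quasiconformal self-maps of the ball are Lipschitz; the present paper's own normal-family argument is aimed at the stronger $K<3^{n-1}$, convex-codomain version. Measured against either proof, your proposal has two genuine gaps. First, the key dilatation lemma for the leading term is not proved: the Euler-relation heuristic that the singular values of $DP_d$ ``split like $d:1:\cdots:1$'' is not an argument, and injectivity of $f$ does not obviously control the angular behaviour of $P_d$ the way you assert. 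The paper's rigorous substitute is the radial distortion property (Proposition \ref{p-dist}): if the first $d-1$ homogeneous terms vanish then Taylor's formula gives $\|h(x)\|\le M\|x\|^{d}$ near the point, while quasiconformality forces $\|g(x)\|\ge m\|x\|^{1/\alpha}$ with $1/\alpha=K_I(g^{-1})^{1/(n-1)}$, whence $K_O\ge d^{\,n-1}$ with no spectral analysis of $DP_d$ at all. For the threshold $2^{n-1}$ you only need the case $d\ge 2$, so you do not even need the odd-degree theorem; but as written this step is asserted, not proved.

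Second, and more seriously, your uniformization step is the hard way and its listed obstacles are not resolved. Choosing $r_k\to 0$ and a linear normalization $T_k$, then hoping a frequency-function argument prevents collapse and handling half-space limits when $x_k\to\partial\mathbb{B}^n$, is precisely what the paper's argument is designed to avoid. For a convex target, Theorem \ref{thm.space1} (Harnack) gives $d(h(x_0),\partial D)\ge (1-\|x_0\|)R_0 2^{1-n}$, so one rescales the domain only by the factor $1-\|x_0\|$ --- every rescaled map is again defined on the whole unit ball, never on a half-space --- and normalizes the range by the scalar $d(h(x_0),\partial D)$; non-degeneracy of the normal-family limit is then automatic from Gehring's distortion theorem, and the limit's Jacobian at $0$ is the limit of the Jacobians by interior elliptic estimates (Theorem \ref{ghall}). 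Without convexity of the target, or Harnack's inequality in some form, your scheme has no canonical choice of $T_k$ and no mechanism to rule out collapse of the blow-up, so the co-Lipschitz conclusion does not follow. Finally, the Lipschitz upper bound you dismiss as routine is itself a substantial theorem of Kalaj; it cannot be obtained by simply differentiating the Poisson integral without serious work on the boundary behaviour.
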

It is an extension of a similar result for hyperbolic harmonic
mappings with respect to the hyperbolic metric (see Tam and Wan,
\cite{tw}, 1998). The proof makes use of M\"obius transformations
in the space, and of a recent  Kalaj's result \cite{Ka.janal13},
which states that harmonic quasiconformal self-mappings of the
unit ball are Lipschitz continuous.

Among other things, in this  paper we prove that the above result holds if  $K<3^{n-1}$, and when codomain is only assumed to be convex. A suitable application of normal family
argument allows us to take a conceptually simpler approach   then
in \cite{KaMpacific}.

The proof is based on  Theorem  \ref{t.locJac}, showing that the degree of the first nonzero homogenous polynomial in the expansion of $n$ dimensional Euclidean harmonic quasiconformal mapping around an internal point is odd. We combine  this  with a distortion property of quasiconformal maps to
prove that  for $n$ dimensional Euclidean harmonic quasiconformal mappings with
$K_O(f) < 3^{n-1}$, Jacobian is never zero.

Our approach gives motivation to define Jacobian non-zero closed families (see  Definition \ref{dfn3.1}) for which a generalization of Heinz's lemma is shown; we also prove bounds for Jacobian from above for arbitrary harmonic quasiconformal maps. Generalization of Heinz's lemma allows us to prove Theorem \ref{thm.space2}, namely that  harmonic quasiconformal maps from unit ball onto a convex domain, that are from Jacobian non-zero closed families, are co-Lipschitz. Essentially, we show that if a map is not co-Lipschitz, then one can get a map of the same type, for which Jacobian vanishes at some internal point. Several applications are also given.

The content of the paper is as follows.   In section \ref{sec2} we
collect some  known definitions and results which we use  in the
paper. Proofs that Jacobians of quasiconformal harmonic maps cannot vanish when $K_O<3^{n-1}$ or in the case of gradients of harmonic functions, and that Taylor expansions of quasiconformal harmonic maps have odd lowest degree are
subject of section \ref{sec4}.
In section \ref{sec3}  we prove the generalization of Heniz's lemma and the co-Lipschitz properties described above, and related results.

\section{Background and Auxiliary results}\label{sec2}

Throughout this paper, we will consider maps from domains, i.e. open and connected regions, of $\mathbb{R}^n$, usually denoted by $\Omega,\Omega'$, to $\mathbb{R}^n$. We will use notation $\mathbb{B}^n$ for the unit ball in $\mathbb{R}^n$, and for $x\in \mathbb{R}^n$ its norm will be denoted by $\|x\|$. For $x\in \mathbb{R}^n$, $a>0$ and $A \subseteq \mathbb{R}^n$, by $d(x,A)$ we will denote the Euclidean  distance of point $x$ from the set $A$, by $aA$ the set $\{a y\,| \,y\in A\}$ and by $x+A$ the set $\{x+y\,|\, y\in A\}$.

By $J_f(x)$ we will denote the Jacobian of $f$ at point $x$, $\partial_j f$ will stand for $\frac{\partial f}{\partial x_j}$, and $\partial_{ij}^2 f$ for $\frac{\partial^2 f}{\partial x_i \partial x_j}$, where $x=(x_1, x_2, \ldots, x_n)$ is the vector argument of $f$.

We will consider Euclidean harmonic maps, also called harmonic maps in this paper, i.e. those with zero Laplacian of each coordinate function. Also, we will deal with quasiconformal maps.

For a domain $\Omega$  in $\mathbb{R}^{n}$, a map $f:\Omega \mapsto \mathbb{R}^{n}$ is quasiconformal if it is a homeomorphism of $\Omega$ to $f(\Omega)$, and if $f$ belongs to Sobolev space $W_{1,\,loc}^{n}(\Omega )$ and there exists $K$, $1\leq K<\infty $, such that
\begin{equation}
\|Df(x)\|^{n}\leq K\,J_{f}(x)\,\,\,\textrm{a.e. on }\Omega, \label{outer.qr0}
\end{equation}
where $\|Df(x)\|$ denotes the operator norm of the Jacobian matrix of $f$ at $x$. The smallest $K$ in (\ref{outer.qr0}) is called the outer dilatation $K_{O}(f)$. The inner dilatation $K_{I}(f)=K_{O}(f^{-1})$, and map $f$ is $K$-quasiconformal if $\max (K_O,K_I) \leq K$.

We will need the following proposition concerning a distortion
property of quasiconformal mappings (see \cite{vu}):
\begin{prop}\label{p-dist} If  $g:\mathbb{B}^n \mapsto \mathbb{B}^n$ is quasiconformal,
$g(0)=0$  and $1/\alpha =K_I(g^{-1})^{1/(n-1)}$, then for some $m>0$, $\|g(x)\|\geq m \|x\|^{1/\alpha}.$
\end{prop}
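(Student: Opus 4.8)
The plan is to recognize the statement as the classical lower Hölder bound for quasiconformal self-maps of the ball and to derive it by applying the standard Hölder continuity theorem to the inverse map $h := g^{-1}$. Writing $y = g(x)$, so that $x = h(y)$, the asserted inequality $\|g(x)\| \ge m\|x\|^{1/\alpha}$ is equivalent, after raising to the power $\alpha$ and rearranging, to the upper bound $\|h(y)\| \le C\|y\|^{\alpha}$ with $C = m^{-\alpha}$ and $\alpha = K_I(g^{-1})^{-1/(n-1)} = K_I(h)^{-1/(n-1)}$. Thus it suffices to show that the $K$-quasiconformal map $h : \mathbb{B}^n \to \mathbb{B}^n$ with $h(0)=0$ is Hölder continuous at the origin with exponent $\alpha = K_I(h)^{1/(1-n)}$; this is exactly where the conjugate exponent $1/(n-1)$ and the inner dilatation of $g^{-1}$ enter.

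For the Hölder bound I would argue through the conformal modulus of ring domains. Fix $y \in \mathbb{B}^n \setminus \{0\}$, put $\rho = \|y\|$ and $\sigma = \|h(y)\|$, and consider the Grötzsch ring $R = \mathbb{B}^n \setminus [0,y]$, whose capacity is the Grötzsch capacity $\gamma_n(1/\rho)$ (after a rotation carrying $[0,y]$ to a radial segment). The image $h(R) = \mathbb{B}^n \setminus h([0,y])$ is a ring separating the unit sphere from the continuum $F = h([0,y])$, which contains both $0$ and the point $h(y)$ at distance $\sigma$. By the extremal (capacity-minimizing) property of the Grötzsch ring among all rings whose bounded complementary component is a continuum joining $0$ to a point at distance $\sigma$, one gets $\mathrm{cap}\, h(R) \ge \gamma_n(1/\sigma)$.

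On the other hand, quasi-invariance of the modulus under the $K$-quasiconformal map $h$, in the form $M(h\Gamma) \le K_I(h)\, M(\Gamma)$ applied to the family $\Gamma$ of curves connecting the two boundary components of $R$, yields $\mathrm{cap}\, h(R) \le K_I(h)\, \mathrm{cap}\, R = K_I(h)\,\gamma_n(1/\rho)$. Combining the two estimates gives
\[
\gamma_n(1/\sigma) \le K_I(h)\,\gamma_n(1/\rho).
\]
It remains to convert this into the power bound $\sigma \le C\rho^{\alpha}$, which is the technical heart of the argument: one invokes the known asymptotics of the Grötzsch capacity $\gamma_n$ near its logarithmic singularity (equivalently, the estimates for the distortion function $\varphi_{K,n}$ and the Grötzsch ring constant $\lambda_n$ from \cite{vu}). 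Since $\gamma_n(1/t)$ behaves like a dimensional constant times $(\log(1/t))^{1-n}$ as $t \to 0$, the displayed inequality translates, upon raising to the negative power $1/(1-n)$ which reverses it, into $\log(1/\sigma) \ge K_I(h)^{1/(1-n)}\log(1/\rho)$, that is $\sigma \le \rho^{\alpha}$ for small $\rho$ with $\alpha = K_I(h)^{1/(1-n)}$; the constant $C$ and the extension to all $\rho \in (0,1)$ follow by continuity and compactness. Translating back through $y = g(x)$ recovers $\|g(x)\| \ge m\|x\|^{1/\alpha}$ with $1/\alpha = K_I(g^{-1})^{1/(n-1)}$.

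The main obstacle I anticipate is precisely this last conversion: controlling $\gamma_n$ quantitatively, with the correct constants, requires the special-function theory of $n$-dimensional capacities rather than the closed forms available when $n=2$. Everything else — the reduction to the inverse map, the Grötzsch extremality, and the modulus quasi-invariance — is structural, the only genuine care being to keep $K_I$ and $K_O$ straight so that the inner dilatation of $g^{-1}$ is the quantity appearing in the exponent.
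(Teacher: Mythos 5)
The paper does not prove this proposition at all: it is quoted as a known distortion result with a citation to Vuorinen \cite{vu}, so there is no in-paper argument to compare against. Your route is essentially the standard textbook proof that lies behind that citation (reduce to H\"older continuity of the inverse at the origin, Gr\"otzsch extremality in the image, the $K_I$-modulus inequality $M(h\Gamma)\le K_I(h)M(\Gamma)$, then the asymptotics of $\gamma_n$ near its logarithmic singularity), and you have the dilatations the right way around: it is the $K_I$-inequality, not the $K_O$-inequality, that must be used here, and the resulting exponent $K_I(g^{-1})^{1/(1-n)}$ is the correct one (the radial stretchings show the $K_O$ version would be false for $n>2$).

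There is one genuine gap: you implicitly assume $g$ is \emph{onto} $\mathbb{B}^n$, so that $h=g^{-1}$ is defined on all of $\mathbb{B}^n$ and the ring $R=\mathbb{B}^n\setminus[0,y]$ lies in its domain with $\mathrm{cap}\,R=\gamma_n(1/\rho)$. In this paper quasiconformality only requires $g$ to be a homeomorphism onto its image, and the proposition is in fact applied (in Proposition \ref{p-tom2}) to $g=(h|_{\mathbb{B}^n})/M$, which is very far from surjective. If $\Omega'=g(\mathbb{B}^n)\subsetneq\mathbb{B}^n$, then $h(R)$ is undefined, and the naive replacement $R=\Omega'\setminus[0,y]$ fails because shrinking a ring \emph{increases} its capacity, which breaks the chain of inequalities in the needed direction. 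The repair is routine but must be said: pick $\delta>0$ with $B(0,\delta)\subseteq\Omega'$, run your argument for $h|_{B(0,\delta)}$ (whose image is still contained in $\mathbb{B}^n$, so the Gr\"otzsch lower bound on the image ring survives by monotonicity), obtaining $\|h(y)\|\le C(\|y\|/\delta)^{\alpha}$ for $\|y\|<\delta$, and handle $\|y\|\ge\delta$ trivially since $\|h(y)\|\le 1$. The constant $m$ then depends on $d(0,\partial g(\mathbb{B}^n))$ and hence on $g$, which the statement permits. With that amendment, and granting the quantitative estimates for $\gamma_n$ and $\lambda_n$ from \cite{vu} that you explicitly defer to, the proof is correct.
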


The next theorem concerns harmonic maps onto a convex domain. For the planar version  of Theorem \ref{thm.space1} cf.
\cite{revroum01,napoc1}, also \cite{topic}, pp.~152-153.  The space  version was  communicated  on
International Conference on Complex Analysis and   Related Topics
(Xth Romanian-Finnish Seminar, August 14-19, 2005, Cluj-Napoca,
Romania), by Mateljevi\'c, cf. also \cite{rckm0}. For convenience of the reader, we  repeat the proof.

\begin{thm}\label{thm.space1}
Suppose  that   $h$ is  an Euclidean harmonic mapping
from the unit ball $\mathbb{B}^n$ onto  a
bounded convex  domain  $D=h(\mathbb{B}^n)$, which
contains the  ball  $h(0)+R_0 \mathbb{B}^n$.  Then for any $x\in \mathbb{B}^n$
$$d(h(x),\partial D) \geq (1-\|x\|) R_0/2^{n-1}.$$
\end{thm}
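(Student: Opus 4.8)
The plan is to reduce this vector-valued statement to a family of scalar estimates by exploiting the convexity of $D$ through its supporting hyperplanes, and then to apply Harnack's inequality for positive harmonic functions on the ball. The key observation is that for the interior point $p=h(x)$ of the bounded convex domain $D$, the distance to the boundary admits a support-function representation. Introducing $M(e)=\sup_{y\in D}\langle y,e\rangle$, I claim that $d(p,\partial D)=\inf_{\|e\|=1}\bigl(M(e)-\langle p,e\rangle\bigr)$. The inequality $\le$ comes from the direction $e^{*}$ pointing to the nearest boundary point, whose supporting hyperplane lies at distance exactly $d(p,\partial D)$; the inequality $\ge$ holds because the ball $B(p,d(p,\partial D))$ is contained in $\overline D$ and hence on one side of every supporting hyperplane, so each $M(e)-\langle p,e\rangle$ is at least the radius. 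Thus it suffices to bound $M(e)-\langle h(x),e\rangle$ from below, uniformly in $e$.

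Next I would fix a unit vector $e$ and set $\phi(y)=M(e)-\langle h(y),e\rangle$. Since each coordinate of $h$ is harmonic, $\phi$ is harmonic on $\mathbb{B}^n$; and since $h(y)\in D$ with $D$ open, one has $\langle h(y),e\rangle<M(e)$, so $\phi$ is strictly positive. Now I would apply the sharp Harnack inequality on the ball. Using the Poisson kernel $P(x,\xi)=(1-\|x\|^{2})/\|x-\xi\|^{n}$ and the elementary bound $\|x-\xi\|\le 1+\|x\|$ for $\xi$ on the unit sphere, a positive harmonic function satisfies $\phi(x)\ge \frac{1-\|x\|}{(1+\|x\|)^{n-1}}\phi(0)$. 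Because $1+\|x\|\le 2$, this gives $\phi(x)\ge \frac{1-\|x\|}{2^{n-1}}\phi(0)$.

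Finally I would evaluate at the center. The hypothesis $h(0)+R_0\mathbb{B}^n\subseteq D$ yields $M(e)\ge\langle h(0)+R_0 e,\,e\rangle=\langle h(0),e\rangle+R_0$, hence $\phi(0)=M(e)-\langle h(0),e\rangle\ge R_0$. Combining, $\phi(x)\ge (1-\|x\|)R_0/2^{n-1}$ for every unit vector $e$; taking the infimum over $e$ and using the support-function representation of the first step delivers the claimed bound $d(h(x),\partial D)\ge (1-\|x\|)R_0/2^{n-1}$.

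The analytic engine here, Harnack's inequality, is entirely standard, so I expect the main obstacle to be conceptual packaging rather than computation: the crucial move is recognizing that convexity lets one replace the harmonic \emph{map} $h$ by the one-parameter family of positive harmonic \emph{functions} $\phi_e$, after which the correct $2^{n-1}$ factor falls out of the Poisson-kernel estimate. The two points requiring care are the justification of the support-function formula for $d(p,\partial D)$ and the strict positivity of $\phi$ (needed for Harnack to apply), both of which follow directly from convexity and the openness of $D$.
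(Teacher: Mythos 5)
Your proof is correct and takes essentially the same route as the paper's: both reduce the claim to positive harmonic scalar functions obtained by pairing $h$ with normals of supporting hyperplanes of the convex image, apply Harnack's inequality on the ball with the constant $2^{1-n}$, and bound the value at the origin using the inscribed ball $h(0)+R_0\mathbb{B}^n$. The only cosmetic difference is that you parametrize by directions via the support function $M(e)$ and finish with the identity $d(p,\partial D)=\inf_{\|e\|=1}(M(e)-\langle p,e\rangle)$, whereas the paper parametrizes by boundary points $a$, uses $u_a(x)=(h(x)-a,n_a)\le\|h(x)-a\|$, and takes the infimum over $a\in\partial D$.
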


\begin{proof}
To every  $a\in \partial D$  we associate
a nonnegative harmonic function $u=u_a$. Since  $D$ is convex, for
$a\in
\partial D$, there is a  supporting hyper-plane $\Lambda_a$, defined as set of all $y$ for which $(y-a,n_a)=0$, where
$n_a$  is  a  unit vector such that
$(y-a,n_a)\geq 0$ for every $y\in \overline{D}$.

Define $u(x)=(h(x)-a,n_a)$. Since $n_a$ is a unit vector, $u(x)\leq \|h(x) -a\|$. Then
$u(0)=(h(0)-a,n_a)= d(h(0),\Lambda_a)$. From geometric
interpretation it is clear that $d(h(0),\Lambda_a) \geq R_0$.

By  Harnack's inequality (cf. \cite{gtrudi}, p. 29), ${c}_n (1-\|x\|)  u(0)  \leq u(x)$,
where ${c}_n=2^{1-n}$.  In particular,
${c}_n (1-\|x\|) R_0 \leq u(x)\leq \|h(x) -a\|$   for every  $a \in \partial D$. Hence, for a
fixed $x\in \mathbb{B}^n$,  $d(h(x),\partial D)=\inf_{a \in \partial D}\|h(x) -a\| \geq
{c}_n (1-\|x\|) R_0$  and therefore  we obtain  the required inequality.
\end{proof}

To apply normal family arguments, we need  the following
results; see Vaisala \cite{vaisala}.
\begin{thm}
Suppose that $\Omega$ is a domain in $\overline{\mathbb{R}^n}$, that $K\geq 1$ and that $r>0$. If $\mathcal{F}$ is a family of $K$-quasiconformal mappings of $\Omega$ (not necessarily onto a fixed domain), such that each $f\in \mathcal{F}$ omits two points $a_f, b_f$ whith spherical distance in $\overline{\mathbb{R}^n}$ at least $r$, then $\mathcal{F}$ is a normal family.
\end{thm}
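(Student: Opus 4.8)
The plan is to derive normality from the Arzel\`a--Ascoli theorem. Since $(\overline{\mathbb{R}^n},q)$ is a compact metric space, where $q$ denotes the spherical (chordal) metric, a family of continuous maps into it is normal (every sequence has a subsequence converging uniformly on compacta) precisely when it is equicontinuous on compact subsets of $\Omega$. Thus the entire content of the theorem is a modulus-of-continuity estimate that is \emph{uniform over the family}. First I would fix $x_0\in\Omega$ and a spherical radius $\rho>0$ with the closed spherical ball $\overline{B_q(x_0,\rho)}\subset\Omega$, working throughout with spherical balls so that the argument is insensitive to whether $x_0$ or any image value equals $\infty$; for equicontinuity on a compact $E\subset\Omega$ one then takes $\rho$ uniform, comparable to the spherical distance from $E$ to $\partial\Omega$.

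The engine of the estimate is the conformal capacity of ring domains and its quasi-invariance. Given $x\in\Omega$ with $t:=q(x,x_0)<\rho$, consider the round spherical ring $A=B_q(x_0,\rho)\setminus\overline{B_q(x_0,t)}$, whose capacity (modelled on the Euclidean value $\omega_{n-1}(\log(\rho/t))^{1-n}$) tends to $0$ as $t\to 0$ with $\rho$ fixed. Because each $f\in\mathcal{F}$ is $K$-quasiconformal, the quasi-invariance of capacity gives $\mathrm{cap}\,f(A)\le K\,\mathrm{cap}\,A$, so the image ring has small capacity once $t$ is small, with a bound depending only on $n,K,\rho,t$ and not on the individual $f$.

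Next I would identify the two continua that $f(A)$ separates. Since $f$ is a homeomorphism onto $f(\Omega)$, one has $\overline{\mathbb{R}^n}\setminus f(A)=E\cup F$ with $E=f(\overline{B_q(x_0,t)})$ a continuum containing both $f(x_0)$ and $f(x)$, and $F=\overline{\mathbb{R}^n}\setminus f(B_q(x_0,\rho))$. Here the injectivity is essential: $f(\partial B_q(x_0,\rho))$ is an embedded topological sphere, so by the Jordan--Brouwer separation theorem it splits $\overline{\mathbb{R}^n}$ into exactly two components, and as $a_f,b_f\notin f(\Omega)\supseteq f(\overline{B_q(x_0,\rho)})$, both omitted points lie in the outer component; hence $F$ is a continuum with $\mathrm{diam}_q F\ge q(a_f,b_f)\ge r$. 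Invoking the lower bound for the capacity of a ring separating two continua (a spherical Teichm\"uller/Loewner-type estimate), there is an increasing function $c_{n,r}:(0,\infty)\to(0,\infty)$, bounded away from $0$ whenever its argument is, such that $\mathrm{cap}\,f(A)\ge c_{n,r}\bigl(\mathrm{diam}_q E\bigr)\ge c_{n,r}\bigl(q(f(x_0),f(x))\bigr)$. Combining the two bounds yields $c_{n,r}\bigl(q(f(x_0),f(x))\bigr)\le K\,\mathrm{cap}\,A$, and letting $t=q(x,x_0)\to 0$ forces $q(f(x_0),f(x))\to 0$ at a rate depending only on $n,K,r$ and $\rho$. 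This is exactly equicontinuity, uniform in $f$, and Arzel\`a--Ascoli then finishes the proof.

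I expect the main obstacle to be the capacity lower bound together with its topological prerequisite: it is precisely the injectivity of $f$, via Jordan--Brouwer, that makes the complement of the image ball a single continuum carrying the two omitted points, and this is why separation of merely \emph{two} points by $r>0$ suffices here, in contrast with the non-injective (quasiregular) Montel-type theorems that must omit several points. A secondary care point is to phrase every step in the spherical metric, so that the resulting modulus of continuity is genuinely uniform across $\mathcal{F}$ and remains valid when points or values coincide with $\infty$.
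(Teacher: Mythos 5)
The paper offers no proof of this statement---it is quoted as a known result from V\"ais\"al\"a's book---so the only meaningful comparison is with the standard proof there, and your argument is essentially that proof: equicontinuity in the spherical metric obtained from quasi-invariance of the modulus (capacity) of the image ring, the Loewner-type lower bound for rings separating two continua of definite spherical diameter (V\"ais\"al\"a's Thm.\ 18.1), and Arzel\`a--Ascoli; this is correct. The one step deserving an extra line is the claim that $F=\overline{\mathbb{R}^n}\setminus f(B_q(x_0,\rho))$ is connected: Jordan--Brouwer gives exactly two complementary components of the embedded sphere $f(\partial B_q(x_0,\rho))$, and one then uses invariance of domain (the relative boundary of the open set $f(B_q(x_0,\rho))$ lies in $f(\partial B_q(x_0,\rho))$) to see that $f(B_q(x_0,\rho))$ fills out exactly one of them, so that $F$ is the closure of the other and indeed carries both omitted points.
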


\begin{thm}\label{nrm}
Let $(f_{j})$, $f_j: \Omega \mapsto \overline{\mathbb{R}^n}$,
be a sequence of $K$-quasiconformal maps, which converges pointwise
to a mapping $f: \Omega \mapsto \overline{\mathbb{R}^n}$.
Then there are three
possibilities:\\
A.   $f$ is a homeomorphism  and the convergence is uniform on compact sets.\\
B.   $f$ assumes exactly two values,  one of which at exactly one
point; covergence is not uniform on compact sets in that case.\\
C.  $f$ is constant.
\end{thm}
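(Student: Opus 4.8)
The plan is to split the trichotomy according to the number of distinct values attained by the limit $f$, feeding the preceding normality criterion into two bootstrap arguments. The one deep ingredient I would quote from the theory of moduli of curve families is the convergence theorem: \emph{a non-constant locally uniform limit of $K$-quasiconformal maps is again $K$-quasiconformal}, hence a homeomorphism. Everything else reduces to the normality theorem stated above together with elementary connectedness. Throughout I write $q(\cdot,\cdot)$ for the spherical distance on $\overline{\mathbb{R}^n}$, and I use that a domain in $\mathbb{R}^n$ with $n\ge 2$ stays connected after removing finitely many points.

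First I would treat the case when $f$ attains at least three distinct values, say $f(x_1),f(x_2),f(x_3)$ pairwise distinct. Fix a pair, e.g. $\{x_2,x_3\}$, and set $\Omega'=\Omega\setminus\{x_2,x_3\}$. Since each $f_j$ is injective, on $\Omega'$ it omits the two values $f_j(x_2),f_j(x_3)$, and $q(f_j(x_2),f_j(x_3))\to q(f(x_2),f(x_3))>0$, so for all large $j$ these omitted values are spherically separated by a fixed amount. By the normality criterion stated above, $(f_j)$ is normal on $\Omega'$; since the pointwise limit is prescribed to be $f$, every locally uniformly convergent subsequence has limit $f$, whence $f_j\to f$ locally uniformly on $\Omega'$. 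Removing instead the pairs $\{x_1,x_3\}$ and $\{x_1,x_2\}$ covers $x_2$ and $x_3$ as well, so $f_j\to f$ locally uniformly on all of $\Omega$. The quoted convergence theorem then applies: $f$ is non-constant, hence $K$-quasiconformal, in particular a homeomorphism. This is alternative A.

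If $f$ attains exactly one value it is constant, which is alternative C, so it remains to analyze the case of exactly two values $a\neq b$. Put $A=f^{-1}(a)$, $B=f^{-1}(b)$, a partition of $\Omega$ into nonempty sets; I claim one of them is a single point. Suppose instead $|A|\ge 2$ and $|B|\ge 2$, and pick $p_1\neq p_2$ in $A$ and $q_1\neq q_2$ in $B$. Removing the pair $\{p_2,q_1\}$, the same normality argument (the omitted values $f_j(p_2),f_j(q_1)$ now tend to the distinct points $a,b$) shows $f_j\to f$ locally uniformly on $\Omega''=\Omega\setminus\{p_2,q_1\}$; in particular $f|_{\Omega''}$ is continuous. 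But $\Omega''$ is connected, while $f|_{\Omega''}$ is continuous, takes only the two values $a,b$, and attains both ($a$ at $p_1$, $b$ at $q_2$, both lying in $\Omega''$), contradicting connectedness. Hence, say, $A=\{x_0\}$ and $B=\Omega\setminus\{x_0\}$: $f$ takes the value $a$ at exactly one point, which is alternative B. Finally, convergence cannot be uniform on a compact neighborhood of $x_0$, for otherwise the limit would be continuous at $x_0$, contradicting $f(x_0)=a\neq b=\lim_{x\to x_0}f(x)$.

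The main obstacle is the quoted convergence theorem used in alternative A, that a non-constant locally uniform limit of $K$-quasiconformal maps is $K$-quasiconformal; this is where the genuine quasiconformal machinery (quasi-invariance and semicontinuity of the modulus of curve families, and the verification that the limit is injective and sense-preserving) is needed, and it is the step for which I would lean on V\"ais\"al\"a's development. By contrast, the passage from pointwise to locally uniform convergence and the rigidity of the two-value case follow cleanly from the normality criterion and connectedness once the three alternatives are organized by the cardinality of the value set, which also makes their mutual exclusivity transparent.
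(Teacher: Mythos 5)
Your argument is correct. Note first that the paper itself offers no proof of this statement: it is quoted as background from V\"ais\"al\"a's book (Theorem 37.2 there), so there is no internal proof to compare against. Your reduction is essentially the standard one from that source: organize the trichotomy by the cardinality of the value set of $f$; use injectivity of the $f_j$ to manufacture, after deleting two suitable points of $\Omega$, a family omitting two spherically separated values, so that the stated normality criterion upgrades pointwise convergence to locally uniform convergence on the punctured domain; then either invoke the locally-uniform convergence theorem (non-constant locally uniform limits of $K$-quasiconformal maps are $K$-quasiconformal) to land in case A, or use continuity of a locally uniform limit plus connectedness of $\Omega$ minus finitely many points to force the two-value set to have a singleton fiber in case B. All the small steps check out: the tail of the sequence suffices for normality, the three punctured domains cover $\Omega$ in the three-value case, and the discontinuity of $f$ at the exceptional point rules out uniform convergence on compacta in case B. The only caveat is that the single genuinely quasiconformal ingredient --- that a non-constant locally uniform limit of $K$-quasiconformal embeddings is again a $K$-quasiconformal embedding, hence a homeomorphism --- is itself quoted rather than proved; since the statement under review is itself a cited background theorem, leaning on that companion result from the same reference is a reasonable and standard division of labor, but be aware that this is where all the modulus-of-curve-families machinery actually lives.
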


Note that the case B does not happen when we use normal families.

\section{Interior zeros of the Jacobian}\label{sec4}
In this section we prove that a quasiconformal harmonic map cannot have lowest degree polynomials in the Taylor expansion of even degree.
Because harmonic functions are real analytic, in the neighborhood of a zero of the Jacobian there is a power expansion in terms of coordinates. The following proposition follows directly
from the quasiconformality condition:

\begin{prop}\label{p.same}
Suppose that $h$ is a real analytic quasiconformal mapping
from a domain $\Omega \subset\mathbb{R}^n$ to
$\mathbb{R}^n$, such that Jacobian is zero at some point $x_0$. Then all the
degrees of first non-zero homogenous polynomials in the Taylor expansion of the coordinate functions of $h$
around $x_0$ are   the same.
\end{prop}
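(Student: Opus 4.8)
The plan is to normalize and then run a blow-up (rescaling) argument, using the quasiconformal limit theorem (Theorem \ref{nrm}) together with invariance of domain to rule out unequal vanishing orders.

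First I would reduce to the case $x_0 = 0$ and $h(0) = 0$ by translating the domain and subtracting the value $h(x_0)$; neither operation changes the Jacobian or the quasiconformality constant, and the lowest-degree homogeneous parts of the coordinate functions of $h - h(x_0)$ are exactly the objects in the statement. Writing $h = (h_1, \dots, h_n)$, let $m_i \ge 1$ be the degree of the first nonzero homogeneous polynomial $P_i$ in the Taylor expansion of $h_i$ at $0$, and set $m = \min_i m_i$. The goal is to show $m_i = m$ for every $i$, so I would suppose for contradiction that the index set $\{i : m_i > m\}$ is nonempty.

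Next I would introduce the rescaled maps $h_t(x) = h(tx)/t^m$ for small $t > 0$. Since $x \mapsto tx$ and $y \mapsto y/t^m$ are conformal similarities, each $h_t$ is $K$-quasiconformal with the same $K$ as $h$, and is a homeomorphism onto its image on any fixed ball $B$ once $t$ is small enough that $tB \subset \Omega$. Expanding $h_i(tx) = \sum_{k \ge m_i} t^k P_{i,k}(x)$ with $P_{i,m_i} = P_i$, one gets $h_i(tx)/t^m = \sum_{k \ge m_i} t^{k-m} P_{i,k}(x)$; hence, as $t \to 0$, this converges locally uniformly to $P_i(x)$ when $m_i = m$ and to $0$ when $m_i > m$. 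Thus $h_t \to \tilde P$ locally uniformly on $B$, where $\tilde P = (\tilde P_1, \dots, \tilde P_n)$ with $\tilde P_i = P_i$ if $m_i = m$ and $\tilde P_i \equiv 0$ otherwise.

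Then I would invoke Theorem \ref{nrm} along a sequence $t_j \to 0$: the limit $\tilde P$ of the $K$-quasiconformal maps $h_{t_j}$ is either a homeomorphism, or a map assuming exactly two values, or constant. Since $m \ge 1$ and $P_i \ne 0$ for at least one $i$, the polynomial map $\tilde P$ is non-constant, excluding case C; being a continuous non-constant map on the connected set $B$, its image is connected and therefore cannot consist of exactly two points, excluding case B. Hence $\tilde P$ is a homeomorphism of $B$ onto a subset of $\mathbb{R}^n$ that is open by invariance of domain. But if some $m_i > m$ then $\tilde P_i \equiv 0$, so $\tilde P(B)$ lies in the hyperplane $\{y_i = 0\}$, which has empty interior, contradicting openness of $\tilde P(B)$. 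Therefore no $m_i$ exceeds $m$, and all the degrees coincide. The main obstacle is the passage to the rescaled limit: one must verify both that the normalization $h(tx)/t^m$ preserves the quasiconformality constant and that the Taylor series converges locally uniformly after this normalization, so that Theorem \ref{nrm} genuinely applies; the rest is then a clean exclusion of the degenerate cases B and C and an appeal to invariance of domain.
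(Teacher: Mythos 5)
Your proof is correct, but it takes a different route from the one the paper indicates. The paper offers no written proof of Proposition \ref{p.same}; it only remarks that the statement ``follows directly from the quasiconformality condition,'' i.e.\ from the pointwise inequality $\|Dh(x)\|^{n}\leq K\,J_{h}(x)$: if some coordinate vanished to order $m_i>m=\min_j m_j$, then near $x_0$ the $i$-th row of $Dh$ would be $O(\|y\|^{m})$ while some other row is comparable to $\|y\|^{m-1}$ on a cone of directions, forcing $J_h=O(\|y\|^{n(m-1)+1})$ against $\|Dh\|^{n}\gtrsim \|y\|^{n(m-1)}$, so that $\|Dh\|^{n}/J_h\to\infty$ --- a short, quantitative, purely infinitesimal argument. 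You instead run the blow-up $h_t(x)=h(tx)/t^{m}$, invoke the V\"ais\"al\"a convergence trichotomy (Theorem \ref{nrm}) to see that the locally uniform limit $\tilde P$ must be a homeomorphism (cases B and C being excluded exactly as you say), and then use invariance of domain to rule out a vanishing limit coordinate. This is sound: the similarity rescalings preserve the dilatation, the Taylor remainder gives the locally uniform convergence $h_t\to\tilde P$ on a fixed ball, and a continuous image of a connected set cannot be a two-point set. Notably, your argument is precisely the technique the paper itself uses one step later to prove Theorem \ref{t.locJac} (there with the scaling $j^{2m}h(x/j)$), so it is well adapted to the paper's framework and in fact yields, as a by-product, that the lowest-degree homogeneous map is itself a quasiconformal limit; what it costs relative to the paper's intended direct computation is the reliance on the normal-family and convergence machinery where an elementary estimate on the dilatation inequality would suffice.
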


Now the following theorem holds:
\begin{thm}\label{t.locJac}
Suppose that $h$ is an Euclidean harmonic quasiconformal mapping
from a  domain $\Omega \subset\mathbb{R}^n$  to
$\mathbb{R}^n$, such that Jacobian is zero at $x_0\in \Omega$. Then the
degree of first non-zero homogenous polynomials in the Taylor expansion of  $h$
around $x_0$ is odd, and the corresponding homogenous polynomial map, obtained by taking the lowest degree homogenous polynomials in the Taylor expansion of the coordinates, is harmonic and quasiconformal.
\end{thm}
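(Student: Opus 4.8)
The plan is to reduce everything to the leading homogeneous part of $h$ at $x_0$ by a blow-up argument, and then extract oddness and quasiconformality from the normal-family trichotomy in Theorem \ref{nrm}. By Proposition \ref{p.same}, the coordinate functions $h_1,\dots,h_n$ all share the same lowest degree $k\ge 1$ in their Taylor expansions about $x_0$, so I can write
$$h(x_0+x)=h(x_0)+P(x)+r(x),$$
where $P=(P_1,\dots,P_n)$ collects the degree-$k$ homogeneous parts (not all $P_i$ vanishing) and $r(x)=O(\|x\|^{k+1})$. Each $h_i$ is harmonic and real analytic; grouping its Taylor series by homogeneity and applying $\Delta$ shows every homogeneous part is separately harmonic, since $\Delta$ lowers degree by two and homogeneous polynomials of distinct degrees are linearly independent. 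In particular $P$ is a harmonic homogeneous polynomial map, which already settles the harmonicity assertion.

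Next I would introduce the rescaled maps
$$h_t(x)=\frac{h(x_0+tx)-h(x_0)}{t^{k}},\qquad t>0,$$
each obtained from $h$ by pre- and post-composition with similarities, hence $K$-quasiconformal with the same $K$ as $h$, and defined on the ball $\|x\|<d(x_0,\partial\Omega)/t$. Dividing the Taylor expansion by $t^k$ gives $h_t(x)=P(x)+t^{-k}r(tx)$, and since $t^{-k}r(tx)=O(t)$ uniformly on compacta, $h_t\to P$ locally uniformly as $t\to 0$.

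Now fix any ball $B=B(0,R)$. For $t$ small enough all $h_t$ are defined and $K$-quasiconformal on $B$ and converge pointwise to $P$ there, so Theorem \ref{nrm} applies to any sequence $h_{t_j}\to P$ on $B$. Because $P$ is a nonzero homogeneous polynomial map of degree $k\ge 1$, its restriction to $B$ is non-constant and takes infinitely many values, excluding cases B and C; hence case A holds, $P|_B$ is a homeomorphism, and as a locally uniform limit of $K$-quasiconformal maps it is $K$-quasiconformal. Letting $R\to\infty$ shows $P$ is injective on all of $\mathbb{R}^n$ and $K$-quasiconformal. Finally, homogeneity gives $P(-x)=(-1)^k P(x)$; if $k$ were even this would force $P(-x)=P(x)$ for all $x$, contradicting injectivity at any $x\neq 0$, so $k$ is odd.

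The step I expect to be most delicate is the passage through Theorem \ref{nrm}: one must check that the rescaled maps are genuinely defined on a fixed ball for all sufficiently small $t$, that the limit is correctly identified with the explicit polynomial $P$, and that the degenerate alternatives B and C are ruled out. Once case A is secured, both the quasiconformality of $P$ and the parity argument are immediate.
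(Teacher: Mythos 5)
Your proposal is correct and follows essentially the same route as the paper: the same blow-up $h(x_0+tx)/t^k$ (the paper's $h_j(x)=j^{2m}h(x/j)$), the same appeal to Theorem \ref{nrm} to get that the nonconstant homogeneous limit is a quasiconformal homeomorphism, and the same parity contradiction from $P(-x)=(-1)^kP(x)$. The only (harmless) difference is that you identify the limit directly from the Taylor remainder estimate $t^{-k}r(tx)=O(t)$, whereas the paper passes through a normal-family subsequence and elliptic a priori estimates to conclude the limit is the degree-$k$ homogeneous part.
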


\begin{proof}
Without loss of generality, by restricting to a ball neighbourhood
and a suitable change of variable, we may assume that $x_0=0$ and
that $\Omega=\mathbb{B}^n$. Suppose the contrary, that the lowest
degree of the first non-zero homogenous polynomials in the Taylor
expansion of the coordinates of $h$ is even, say equal to $2m$.
Then consider the sequence of harmonic quasiconformal maps, $h_j:
\mathbb{B}^n \mapsto \mathbb{R}^n$, $h_j(x)=j^{2m} h(x/j)$. Note
that the first non-zero homogenous polynomials in the expansion of
all the maps $h_j$ are the same as for $h$. Because for $j>1$
derivatives of $h$ are bounded uniformly in $j$ and on
$\mathbb{B}^n$, from Taylor expansion it follows that all the maps
$h_j$, for $j>1$, are uniformly bounded on the unit ball.
Therefore, $\{h_j \,|\, j\in \mathbb{N}\}$ is a normal family, and
a subsequence of our sequence converges to a harmonic mapping $f$
uniformly on compact sets. By elliptic regularity theory (cf. H\"
older and Schauder apriori estimates, \cite{gtrudi}, pp. 60, 90),
all the derivatives of the subsequence will converge to the
corresponding derivatives of $f$. It follows that coordinates of
$f$ are equal to homogenous polynomials of degree $2m$, since
higher degree homogenous polynomials in the expansions of $h_j$
tend to zero. In particular, the limit function $f$ is not
constant, and by Theorem   \ref{nrm}, $f$ is quasiconformal, and hence injective. But the
limit map $f$ satisfies $f(-x)=f(x)$, which is a contradiction. Similar procedure in the case of odd lowest degree gives the claimed
homogenous polynomial harmonic quasiconformal map.
\end{proof}

Next, we will combine this theorem with a distortion property of quasiconformal maps.
\begin{prop}\label{p-tom2}
 Suppose that  $h:  \Omega\mapsto \mathbb{R}^n$  is  a harmonic  quasiconformal map. If $\partial_j h(x_0)=0$ and $\partial^2_{ij}h(x_0)=0$ for some $x_0\in \Omega$,
then $K_O(h)  \geq 3^{n-1}$ .
\end{prop}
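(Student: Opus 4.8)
The plan is to turn the vanishing-derivative hypotheses into a statement about the lowest degree of the Taylor expansion, produce a homogeneous blow-up map as in Theorem \ref{t.locJac}, and then play its degree against the distortion exponent of Proposition \ref{p-dist}.

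First I would record that $\partial_j h(x_0)=0$ means the $j$-th column of the Jacobian matrix vanishes, so $J_h(x_0)=0$. Since $h$ is harmonic, it is real analytic, so $Dh$ and $J_h$ are continuous and the quasiconformality inequality $\|Dh(x)\|^n\le K_O(h)\,J_h(x)$, valid a.e., holds at every point; at $x_0$ it forces $\|Dh(x_0)\|=0$, i.e. $Dh(x_0)=0$. Hence the first non-zero homogeneous polynomial in the Taylor expansion of $h$ at $x_0$ has degree $d\ge 2$, and by Theorem \ref{t.locJac} this degree is odd, so in fact $d\ge 3$ (this is consistent with, and indeed already contains, the vanishing of $\partial^2_{ij}h(x_0)$).

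Next I would invoke the construction from the proof of Theorem \ref{t.locJac}: rescaling $h_j(x)=j^{d}h(x_0+x/j)$ and passing to a subsequential limit yields a non-constant harmonic quasiconformal map $P$ whose coordinates are homogeneous polynomials of degree $d\ge 3$. Affine rescalings of domain and target leave the outer dilatation unchanged, and dilatation does not increase under locally uniform limits of $K$-quasiconformal maps, so $K_O(P)\le K_O(h)$. Because $P$ is injective and homogeneous of degree $d$, the continuous function $\|P\|$ is bounded between two positive constants on the unit sphere, whence $c\|x\|^{d}\le\|P(x)\|\le C\|x\|^{d}$ for all $x$. After normalizing $P$ by a scalar to obtain a quasiconformal map $g$ of $\mathbb{B}^n$ into $\mathbb{B}^n$ with $g(0)=0$, $K_O(g)=K_O(P)$, and $\|g(x)\|\le C'\|x\|^{d}$, I would apply Proposition \ref{p-dist}. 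Since $K_I(g^{-1})=K_O(g)=K_O(P)$, it yields $\|g(x)\|\ge m\|x\|^{1/\alpha}$ with $1/\alpha=K_O(P)^{1/(n-1)}$.

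The decisive step is the comparison of exponents: confronting the distortion lower bound $\|g(x)\|\ge m\|x\|^{1/\alpha}$ with the homogeneity upper bound $\|g(x)\|\le C'\|x\|^{d}$ as $\|x\|\to 0$ forces $1/\alpha\ge d$, since $1/\alpha<d$ would give $m\|x\|^{1/\alpha}>C'\|x\|^{d}$ for all sufficiently small $\|x\|$, a contradiction. Therefore $K_O(P)^{1/(n-1)}\ge d\ge 3$, and $K_O(h)\ge K_O(P)\ge 3^{n-1}$. I expect the main obstacle to be precisely this exponent comparison — reconciling the abstract distortion exponent $1/\alpha=K_O^{1/(n-1)}$ with the geometric degree $d$ — together with the bookkeeping that guarantees the blow-up limit $P$ is genuinely non-constant, injective, and homogeneous of degree $d$, and that the scalar normalization preserves both the dilatation and the degree-$d$ growth needed to invoke Proposition \ref{p-dist}.
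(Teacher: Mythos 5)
Your proof is correct, but it takes a noticeably heavier route than the paper's. The paper never forms a blow-up limit here: it reads the hypothesis (all first and second partials vanishing at $x_0$) directly through Taylor's formula to get $\|h(x)-h(x_0)\|\le M\|x-x_0\|^3$ on a small ball, normalizes $g=h/M$ so that $g$ maps $\mathbb{B}^n$ into $\mathbb{B}^n$, and then runs exactly the exponent comparison you describe — $m\|x\|^{1/\alpha}\le\|g(x)\|\le\|x\|^3$ forces $K_O^{1/(n-1)}=1/\alpha\ge 3$ — on $h$ itself. You instead pass through Theorem \ref{t.locJac}: you deduce $J_h(x_0)=0$, hence $Dh(x_0)=0$, invoke odd degree to get $d\ge 3$, build the homogeneous limit $P$, and apply Proposition \ref{p-dist} to $P$ after noting lower semicontinuity of the dilatation under locally uniform convergence. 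This is logically sound (there is no circularity, since Theorem \ref{t.locJac} does not rely on this proposition), and it buys a marginally sharper conclusion, $K_O(h)\ge d^{\,n-1}$ with $d$ the actual lowest degree, plus two-sided growth $c\|x\|^d\le\|P(x)\|\le C\|x\|^d$; but it imports the normal-family and elliptic-regularity machinery and the semicontinuity of $K_O$, none of which the statement actually needs. Note also that in the paper's architecture Proposition \ref{p-tom2} is deliberately kept independent of Theorem \ref{t.locJac}, the two being combined only afterwards in Theorem \ref{p-tom3}; your version collapses that separation. For the upper bound alone, the Taylor remainder already gives $\|h(x)-h(x_0)\|=O(\|x-x_0\|^3)$ with no limit construction, which is all Proposition \ref{p-dist} requires.
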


\begin{proof} Without loss  of  generality, by restricting to ball neighbourhood of $x_0$ whose closure is in the domain, and a change of variable which does not change quasiconformal distortion,  we can suppose that $x_0=0$, $h(x_0)=0$, and that, by the Taylor formula,  there is  $M>0$   such  that $\|h(x)\|
\leq M \|x\|^3$ on $\mathbb{B}^n$. Let $g=(h|_{\mathbb{B}^n})/M$

If  $1/\alpha =K_I(g^{-1})^{1/(n-1)}$, then by Proposition \ref{p-dist}, $m \|x\|^{1/\alpha}\leq \|g(x)\|\leq
\|x\|^3$.   Hence   $K_O^{1/(n-1)} \geq 3$, and therefore  $K_O
\geq 3^{n-1}$, where $K_O(g)=K_I(g^{-1})$.
\end{proof}

\begin{thm}\label{p-tom3}

 Suppose that  $h:  \Omega\mapsto \mathbb{R}^n$  is  a harmonic  quasiconformal map. If  $K_O(h) <
3^{n-1}$, then  its Jacobian has no zeros.
\end{thm}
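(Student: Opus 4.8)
The plan is to argue by contradiction, combining Theorem \ref{t.locJac} with Proposition \ref{p-tom2}. Suppose that $J_h(x_0)=0$ for some $x_0\in\Omega$. First I would observe that, since $h$ is harmonic it is real analytic, so the matrix field $Df$ is continuous; the quasiconformality inequality $\|Df(x)\|^n\leq K_O(h)\,J_h(x)$, which holds a.e., then holds at every point, both sides being continuous, and so the a.e. inequality propagates to all of $\Omega$. Evaluating at $x_0$ gives $\|Df(x_0)\|^n\leq K_O(h)\,J_h(x_0)=0$, hence $Df(x_0)=0$, i.e. $\partial_j h(x_0)=0$ for every $j$. In particular the first non-zero homogeneous polynomial in the Taylor expansion of $h$ about $x_0$ has degree at least $2$.

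Next I would invoke Theorem \ref{t.locJac}: because the Jacobian vanishes at $x_0$, that lowest degree is odd. Being odd and at least $2$, it must be at least $3$. Consequently not only the first derivatives but also all the second derivatives vanish at $x_0$, that is $\partial_j h(x_0)=0$ and $\partial^2_{ij}h(x_0)=0$ for all $i,j$.

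Finally, Proposition \ref{p-tom2} applies to exactly this situation and yields $K_O(h)\geq 3^{n-1}$, contradicting the hypothesis $K_O(h)<3^{n-1}$. Therefore $J_h$ can have no zero in $\Omega$.

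The crux, and the only place where some care is needed, is ruling out the degree-$1$ case, since Proposition \ref{p-tom2} requires the vanishing of \emph{both} first and second order derivatives. The oddness supplied by Theorem \ref{t.locJac} alone still permits degree $1$; what excludes it is the quasiconformality constraint at the zero of the Jacobian, which forces $Df(x_0)=0$. Equivalently, one could note that by Theorem \ref{t.locJac} the lowest-degree homogeneous part is itself quasiconformal, and a linear quasiconformal map is invertible, so a degree-$1$ lowest part is incompatible with $J_h(x_0)=0$. Either way, once degree at least $3$ is secured the conclusion is immediate, so the theorem is in essence a combination of the two preceding results.
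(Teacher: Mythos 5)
Your proof is correct and follows essentially the same route as the paper: assume $J_h(x_0)=0$, use Theorem \ref{t.locJac} to get vanishing of first and second derivatives at $x_0$, and conclude via Proposition \ref{p-tom2}. The only difference is that you explicitly justify why the lowest degree cannot be $1$ (via the pointwise dilatation inequality, or via invertibility of a linear quasiconformal map), a step the paper leaves implicit; this is a worthwhile clarification but not a different argument.
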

\begin{proof}
Contrary suppose that $J_h(x_0)=0$ for some $x_0\in \Omega$. Since $h$ is quasiconformal,
by Theorem  \ref{t.locJac} we find  $\partial^2_{ij}h(x_0)=0$.
Now, by Proposition \ref{p-tom2}, $K_O(h)  \geq 3^{n-1}$  and this
yields a contradiction.
\end{proof}

\section{Bounds
for the Jacobian}\label{sec3}

Heinz's lemma-type results can be obtained for quasiconformal harmonic maps, using normal families. To state our results clearly and in their generality, it is useful to give some definitions.

\begin{dfn}\label{dfn3.2}
We say that a family $\mathcal F$ of maps from domains in $\mathbb R^n$ to $\mathbb R^n$ is RHTC-closed if the following holds:
\begin{itemize}
\item (Restrictions) If $f: \Omega\mapsto \mathbb{R}^n$ is in $\mathcal{F}$, $\Omega'\subset \Omega$ is open, connected and nonempty, then $f|_{\Omega'}\in \mathcal{F}$.
\item (Homothety)  If $f: \Omega\mapsto \mathbb{R}^n$ is in $\mathcal{F}$, $a\in \mathbb R$, $a>0$ then $g: \Omega\mapsto \mathbb{R}^n$ and $h: a \Omega\mapsto \mathbb{R}^n$ are in $\mathcal{F}$, where $g(x)=a f(x)$ and $h(x)=f(x/a)$.
\item (Translations)  If $f: \Omega\mapsto \mathbb{R}^n$ is in $\mathcal{F}$, $t\in \mathbb R^n$,  then $g: \Omega\mapsto \mathbb{R}^n$ and $h: t+ \Omega\mapsto \mathbb{R}^n$ are in $\mathcal{F}$, where $g(x)=t+ f(x)$ and $h(x)=f(x-t)$.
\item (Completeness)  If $f_j: \Omega\mapsto \mathbb{R}^n$, $j\in \mathbb{N}$ are in $\mathcal{F}$, $(f_j)$ converges uniformly on compact sets to $g: \Omega\mapsto \mathbb{R}^n$, where $g$ is non-constant, then $g\in \mathcal{F}$.
\end{itemize}
\end{dfn}
For instance, families of harmonic maps and of gradients of harmonic functions are RTHC-closed. Also, due to Theorem $\ref{nrm}$, for any given $K\geq 1$, a subfamily of $K$-quasiconformal members of a RTHC-closed family is also RTHC-closed.

\begin{dfn}\label{dfn3.1}
We say that a family $\mathcal F$ of harmonic maps from domains in $\mathbb R^n$ to $\mathbb R^n$  is non-zero Jacobian closed, if it  is RHTC-closed and Jacobians of all maps in the family have no zeros.
\end{dfn}

Note that uniform convergence on compact sets in the case of harmonic maps implies convergence of higher order derivatives, via H\" older and Schauder apriori estimates (see \cite{gtrudi}, pp. 60, 90). This is related to elliptic regularity and holds for more general elliptic operators, and not just Laplacian, so that this method applies in that more general setting too.

\begin{thm}\label{ghall}
For every non-zero Jacobian closed family of $K$-quasiconformal harmonic maps, there is a constant $c>0$, such that if $f:\mathbb{B}^n \mapsto  \mathbb{R}^n$ is from the family,  $d(0,\partial f(\mathbb{B}^n)) \geq 1$  and $f(0)=0$, then
$$J_f(0)\geq c.$$
\end{thm}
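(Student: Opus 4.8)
The plan is to argue by contradiction using a normal-family compactness argument, in the same spirit as the proof of Theorem \ref{t.locJac}. Suppose no such constant $c$ exists. Then there is a sequence $f_j:\mathbb{B}^n\mapsto\mathbb{R}^n$ of maps from the family with $f_j(0)=0$, $d(0,\partial f_j(\mathbb{B}^n))\geq 1$, and $J_{f_j}(0)\to 0$. The normalization $d(0,\partial f_j(\mathbb{B}^n))\geq 1$ means precisely that $\mathbb{B}^n\subseteq f_j(\mathbb{B}^n)$, so each $f_j$ has a well-defined inverse on the unit ball. From the quasiconformality condition (\ref{outer.qr0}) one has $\|Df_j(0)\|^{n}\leq K\,J_{f_j}(0)\to 0$, so $\|Df_j(0)\|\to 0$; this is the quantitative input that will force the Jacobian of the limiting map to vanish at the origin.

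To gain compactness I would pass to the inverse maps. Set $g_j:=(f_j)^{-1}|_{\mathbb{B}^n}:\mathbb{B}^n\mapsto\mathbb{B}^n$; these are $K$-quasiconformal, fix the origin, and take values in the bounded set $\mathbb{B}^n$, hence omit every point outside $\mathbb{B}^n$ and in particular two points at definite spherical distance. By the quoted normality theorem of V\"ais\"al\"a they form a normal family, so after passing to a subsequence $g_j\to g$ uniformly on compact subsets of $\mathbb{B}^n$. By Theorem \ref{nrm} the limit $g$ is either a homeomorphism (case A), assumes two values (case B), or is constant (case C). Case B does not occur for normal families, and once case C is excluded $g$ is a $K$-quasiconformal homeomorphism of $\mathbb{B}^n$ onto a domain $g(\mathbb{B}^n)\ni 0$.

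Granting that $g$ is a homeomorphism, the inverse relation transfers back: on a fixed compact neighbourhood $V$ of $0$ contained in $g(\mathbb{B}^n)$ the maps $f_j=g_j^{-1}$ converge uniformly to $f:=g^{-1}$. Since the $f_j$ are harmonic, the uniform limit $f$ is harmonic on the interior of $V$, and by the interior H\"older--Schauder estimates quoted after Definition \ref{dfn3.1} the derivatives converge as well, so $Df_j(0)\to Df(0)$ and $J_f(0)=\lim_j J_{f_j}(0)=0$. Moreover $f$ is non-constant, being a homeomorphism near $0$. Choosing a small ball $W\ni 0$ inside the interior of $V$, the Restrictions axiom gives $f_j|_W\in\mathcal F$, and since $f_j|_W\to f|_W$ uniformly on compacts of $W$ with $f|_W$ non-constant, the Completeness axiom of Definition \ref{dfn3.2} yields $f|_W\in\mathcal F$. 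But the family is non-zero Jacobian closed (Definition \ref{dfn3.1}), so $J_{f}$ cannot vanish on $W$, contradicting $J_f(0)=0$. This contradiction gives the desired uniform lower bound $c>0$.

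The hard part will be excluding case C, i.e.\ ruling out a collapse $g_j\to 0$ of the inverses to the origin. Quasiconformality alone does not forbid this, since $x\mapsto x/j$ is conformal and collapses $\mathbb{B}^n$ to a point, so the hypothesis $J_{f_j}(0)\to 0$ is essential. A collapse of $g_j$ would force $f_j$ to map balls of radius tending to $0$ onto a superset of $\tfrac12\mathbb{B}^n$, that is, to \emph{expand} at the origin; on the other hand $\|Df_j(0)\|\to 0$ says $f_j$ contracts infinitesimally at $0$. I would derive a contradiction from this tension using the quasisymmetry (two-sided, power-type distortion) of $K$-quasiconformal maps, which propagates the infinitesimal contraction at $0$ to genuine contraction on a fixed ball and so rules out the required expansion; the distortion estimate of Proposition \ref{p-dist}, whose exponent is uniformly controlled by $K$, is the natural tool here. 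Equivalently, one shows the domains $f_j^{-1}(\mathbb{B}^n)$ contain a common ball $r\mathbb{B}^n$; then $f_j|_{r\mathbb{B}^n}$ is itself a family of harmonic maps uniformly bounded by $1$, and the whole argument reduces to the harmonic normal-family compactness already used in Theorem \ref{t.locJac}, bypassing the inverses altogether.
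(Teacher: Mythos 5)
Your overall skeleton (argue by contradiction, extract a locally uniformly convergent subsequence, upgrade to convergence of derivatives by the interior Schauder estimates, put the limit in $\mathcal F$ via the Completeness axiom, and contradict the non-zero Jacobian hypothesis) is exactly the paper's, and that final part of your argument is sound. The genuine gap is the one you yourself flag: excluding the degenerate (constant) limit, and neither of your two proposed fixes works. For route 1, the constant $m$ in Proposition \ref{p-dist} depends on the map $g$, not only on $K$ and $n$ (witness $g(x)=x/j$), so it gives no uniform lower bound over the family; worse, the principle you want --- that quasisymmetry propagates $\|Df_j(0)\|\to 0$ to contraction on a fixed ball --- is false for $K$-quasiconformal maps: the radial stretch $x\mapsto \epsilon^{-\beta}x\|x\|^{\beta-1}$ with $\beta=K^{1/(n-1)}$ has vanishing derivative and Jacobian at $0$ yet maps $\epsilon\mathbb{B}^n$ onto $\mathbb{B}^n$, so its inverses collapse exactly as you fear. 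For route 2, the claim that the domains $f_j^{-1}(\mathbb{B}^n)$ contain a common ball $r\mathbb{B}^n$ is equivalent to a uniform bound $\max_{\|x\|=r}\|f_j(x)\|\leq 1$, which does not follow from $d(0,\partial f_j(\mathbb{B}^n))\geq 1$ and $J_{f_j}(0)\to 0$ by the quasiconformal distortion estimates alone (the same radial stretch, suitably rescaled, defeats any such argument that does not use harmonicity); as stated it is essentially the compactness you are trying to prove.

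The missing idea is a normalization you never perform and which the Homothety axiom exists to permit: replace $f_j$ by $f_j/d_j$ with $d_j=d(0,\partial f_j(\mathbb{B}^n))\geq 1$. This stays in the family, only decreases $J(0)$, and forces $\partial f_j(\mathbb{B}^n)$ to contain a point of the unit sphere. Then each $f_j$ omits that point and $\infty$, which are at a definite spherical distance, so V\"ais\"al\"a's criterion gives normality of the maps $f_j$ themselves --- no detour through inverses, hence no case C for the inverses to worry about --- and the Gehring distortion property guarantees $f_j(\tfrac12\mathbb{B}^n)\supseteq c_0\mathbb{B}^n$ for a constant $c_0=c_0(n,K)>0$, so the limit cannot be constant. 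After that your concluding paragraph (elliptic regularity, Completeness, $J_g(0)=0$) applies verbatim. Without this normalization step the compactness of the direct maps fails outright (consider $f_j(x)=jx$, which satisfies $f_j(0)=0$ and $d(0,\partial f_j(\mathbb{B}^n))\geq 1$), and your argument does not close.
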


\begin{proof}
Suppose the contrary, i.e. that the family contains a sequence
$(f_j)$ of maps from the unit ball satisfying
$\mathbb{B}^n\subseteq f_j(\mathbb{B}^n)$ and $f_j(0)=0$, such
that $J_{f_j}(0) \rightarrow 0$ as $j\rightarrow \infty$.
Multiplying functions by constants less than $1$ if necessary, we
may assume, without loss of generality, that the boundary of the
image $f_j(\mathbb{B}^n)$ always contains a point on the unit
spere, and thus use a normal family argument (since infinity and
point on the unit sphere are on a fixed spherical distance) to
pass to a convergent subsequence.  Now note that because of the
Gehring distortion property
 (see \cite{vaisala} p. 63, \cite{aaa}), $f_j(\frac{1}{2}\mathbb{B}^n)$ will contain a ball around zero of fixed radius, so the limit cannot degenerate to a constant function. But then the limit, say $g$, is in the family. However, by apriori estimates of elliptic regularity theory, derivatives of $f_j$ will also converge to derivatives of $g$, and hence $J_g(0)=0$, contradicting the non-zero Jacobian assumption.
\end{proof}

Note that the same normal family argument gives upper bound for Jacobian, but for general quasiconformal harmonic maps. Namely, the following theorem holds:

\begin{thm}\label{upperl}
There is a constant $c>0$, depending only on $K$, such that if $f:\mathbb{B}^n \mapsto  \mathbb{R}^n$ is K-quasiconformal harmonic, $d(0,\partial f(\mathbb{B}^n))\leq 1$ and $f(0)=0$, then
$$J_f(0)\leq c.$$
\end{thm}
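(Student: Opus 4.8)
The plan is to argue by contradiction using exactly the normal family machinery from the proof of Theorem~\ref{ghall}, but now exploiting that a blow-up of the Jacobian is incompatible with the uniform convergence of derivatives guaranteed by elliptic regularity. Suppose no such constant exists. Then there is a sequence $(f_j)$ of $K$-quasiconformal harmonic maps $f_j:\mathbb{B}^n\mapsto\mathbb{R}^n$ with $f_j(0)=0$ and $d(0,\partial f_j(\mathbb{B}^n))\leq 1$, but $J_{f_j}(0)\to\infty$.

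First I would set up the normal family. Since $d(0,\partial f_j(\mathbb{B}^n))\leq 1$, each image has a boundary point within distance $1$ of the origin; choosing such a point $p_j\in\partial f_j(\mathbb{B}^n)$ we get $\|p_j\|\leq 2$, and $p_j$ is omitted by $f_j$ because the image is open. As $f_j$ maps into $\mathbb{R}^n$, the point $\infty$ is omitted as well. The two omitted points $p_j$ and $\infty$ lie at spherical distance bounded below by a positive constant depending only on the bound $\|p_j\|\leq 2$, so by the normal family criterion for quasiconformal maps omitting two points of fixed spherical separation (Väisälä \cite{vaisala}), the family $\{f_j\}$ is normal. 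Passing to a subsequence, $f_j$ converges uniformly on compact subsets to a limit $f:\mathbb{B}^n\mapsto\overline{\mathbb{R}^n}$, and by Theorem~\ref{nrm} this limit is either a homeomorphism or constant, the two-value case not occurring for normal families.

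The key step is then to read off the convergence of the Jacobians at the origin. Since $f_j(0)=0$ for all $j$, the limit satisfies $f(0)=0$, so $f$ takes finite values on a neighbourhood of $0$ and the convergence there is uniform in the Euclidean metric. A uniform limit of harmonic maps is harmonic, so by the H\"older and Schauder a priori estimates of elliptic regularity theory (\cite{gtrudi}, pp.~60, 90) all derivatives of $f_j$ converge to the corresponding derivatives of $f$ on a ball around $0$. In particular $J_{f_j}(0)\to J_f(0)$. Whether $f$ is a homeomorphism or constant, $J_f(0)$ is a finite number, contradicting $J_{f_j}(0)\to\infty$. This contradiction proves the theorem.

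I expect the only delicate point to be verifying that the normalization genuinely produces a normal family: one must ensure the omitted point $p_j$ stays in a fixed bounded region, so that its spherical distance to $\infty$ does not degenerate, which is precisely what the hypothesis $d(0,\partial f_j(\mathbb{B}^n))\leq 1$ supplies. Once normality is in hand, no lower bound on the image and no non-zero Jacobian assumption are needed, in contrast to Theorem~\ref{ghall}, since the contradiction arises entirely from the finiteness of $J_f(0)$ forced by the convergence of derivatives, rather than from its vanishing.
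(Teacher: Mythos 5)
Your proposal is correct and follows essentially the same route as the paper's proof: a contradiction via the normal family criterion (the omitted boundary point near the origin together with $\infty$ gives the required spherical separation), followed by locally uniform convergence and the elliptic a priori estimates forcing $J_{f_j}(0)\to J_f(0)<\infty$. The only cosmetic difference is that the paper first rescales so that $d(0,\partial f_j(\mathbb{B}^n))=1$ exactly (which only increases the Jacobian), whereas you work directly with a boundary point in a fixed bounded region; both normalizations serve the same purpose.
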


\begin{proof}
Proof is essentially the same as for Theorem  \ref{ghall}: we take a sequence of $K$-quasiconformal harmonic maps $(f_j)$ from the unit ball, such that $J_{f_j}(0) \rightarrow \infty$ as $j\rightarrow \infty$,  $f_j(0)=0$ and $d(0,\partial f_j(\mathbb{B}^n))=1$, multiplying by constants now greater than one if necessary. This will provide a subsequence with a limit mapping whose Jacobian at zero is finite, a contradiction.
\end{proof}

Applying Theorem \ref{thm.space1}, we get the following result regarding co-Lipschitz condition for maps from ball to a convex domain:

\begin{thm}\label{thm.space2}
Suppose that $h$  is  a harmonic quasiconformal mapping
from the unit ball $\mathbb{B}^n$ onto  a
bounded convex  domain  $D=h(\mathbb{B}^n)$, and that $h$  belongs to a non-zero Jacobian closed family of  harmonic maps. Then $h$ is co-Lipschitz on  $\mathbb{B}^n$.
\end{thm}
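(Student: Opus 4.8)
The plan is to prove Theorem \ref{thm.space2} by contradiction, extracting a limiting map whose Jacobian vanishes at an interior point, contradicting the non-zero Jacobian property of the family. Suppose $h$ is not co-Lipschitz. The co-Lipschitz property amounts to a uniform lower bound on how fast $h$ expands distances; combined with Theorem \ref{thm.space1}, which controls the distance $d(h(x),\partial D)$ from below in terms of $1-\|x\|$, failure of co-Lipschitzity should manifest as the existence of a sequence of points $x_j \in \mathbb{B}^n$ at which the local expansion of $h$ degenerates. More precisely, I would quantify the co-Lipschitz constant near a point $x$ in terms of $J_h(x)^{1/n}$ (using quasiconformality to compare $J_h$ with $\|Dh\|^n$ and with the smallest singular value), so that ``not co-Lipschitz'' forces $J_h(x_j) \to 0$ suitably normalized, i.e. the Jacobian becomes negligible relative to the geometric scale set by Theorem \ref{thm.space1}.

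The key device is a renormalization at the points $x_j$. Using the Restrictions, Homothety, and Translations closure axioms of Definition \ref{dfn3.2}, I would form rescaled maps
\[
g_j(y) = \lambda_j^{-1}\bigl(h(x_j + r_j y) - h(x_j)\bigr),
\]
defined on suitable balls, where $r_j = 1 - \|x_j\|$ (or a comparable scale) and $\lambda_j$ is a normalizing factor chosen so that the images have controlled size — for instance so that $d(0,\partial g_j(\mathrm{dom}))$ is bounded below by a fixed constant via Theorem \ref{thm.space1}. Each $g_j$ remains a $K$-quasiconformal harmonic map in the family by the closure axioms, and they are normalized to fix $0 \mapsto 0$. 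The normal family machinery (Theorem \ref{nrm} together with the omitted-values normality theorem) then extracts a subsequence converging uniformly on compact sets to a limit map $g$.

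The heart of the argument is that the limit $g$ must be non-constant yet have $J_g(0) = 0$. Non-degeneracy is where Theorem \ref{thm.space1} enters decisively: because $h$ maps onto a convex domain, the lower bound $d(h(x),\partial D) \ge (1-\|x\|)R_0/2^{n-1}$ persists under the rescaling and guarantees that the images $g_j(\mathrm{dom})$ contain balls of fixed radius around $0$, so the limit cannot collapse to case B or C of Theorem \ref{nrm} — exactly as in the proof of Theorem \ref{ghall}. Hence $g$ lies in the family by the Completeness axiom. On the other hand, the normalization $\lambda_j$ was chosen to absorb the decaying Jacobian, so that the Schauder/elliptic regularity convergence of derivatives (which forces $J_{g_j}(0) \to J_g(0)$) yields $J_g(0) = 0$. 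This contradicts the defining property of a non-zero Jacobian closed family in Definition \ref{dfn3.1}, completing the proof.

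The main obstacle I anticipate is the correct choice of the normalizing scales $r_j$ and $\lambda_j$: one must simultaneously arrange that the domains of $g_j$ exhaust a fixed neighborhood of the origin, that the limit is non-constant (needing the convexity-driven lower bound of Theorem \ref{thm.space1} to survive rescaling), and that the failure of the co-Lipschitz bound translates precisely into $J_g(0)=0$ rather than merely small. Pinning down the quantitative equivalence between ``co-Lipschitz'' and a uniform lower bound on the appropriately normalized Jacobian — so that its negation produces a genuine \emph{zero} in the limit rather than an infimum that is only approached — is the delicate point, and it is essentially the assertion flagged in the introduction that a non-co-Lipschitz map of this type yields, in the limit, a map of the same type whose Jacobian vanishes at an interior point.
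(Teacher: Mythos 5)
Your renormalization $g_j(y)=\lambda_j^{-1}\bigl(h(x_j+r_j y)-h(x_j)\bigr)$ with $r_j=1-\|x_j\|$ and $\lambda_j=d(h(x_j),\partial D)$, the use of Theorem \ref{thm.space1} to keep the rescaled images from degenerating, and the normal-family plus elliptic-regularity compactness step are exactly the paper's argument; the paper merely packages the compactness step once and for all as Theorem \ref{ghall} and applies it directly to $f(x)=\bigl(h(x_0+(1-\|x_0\|)x)-h(x_0)\bigr)/d(h(x_0),\partial D)$ at every point $x_0$, obtaining a uniform lower bound for $J_h$ and hence for the smallest singular value of $Dh$, from which co-Lipschitzity follows by integrating along preimages of segments in the convex target. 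This direct formulation sidesteps the ``delicate point'' you flag, since one never needs to convert failure of co-Lipschitzity into a sequence with vanishing normalized Jacobian --- only the easier converse implication is used.
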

\begin{proof}
Let $x_0$ be a point in $\mathbb{B}^n$. Define $f:\mathbb{B}^n\mapsto \mathbb{R}^n$ by
$$f(x)=\frac{h(x_0+(1-\|x_0\|)x)-h(x_0)}{d(h(x_0),\partial D)}.$$
Applying Theorem \ref{ghall} to $f$, and using the fact that norm of the derivative of $K$-quasiconformal map is bounded from below by a constant times $n$-th root of the Jacobian, and using the uniform estimate $(1-\|x_0\|)/(d(h(x_0),\partial D))\leq 2^{n-1}/d(h(0),\partial D)$ from Theorem \ref{thm.space1}, we get a uniform bound from below for norm of the derivative of $h$, and hence conclude that map is co-Lipschitz.
\end{proof}

A special case of interest we get by applying Theorem \ref{thm.space2}, combining it with Theorem  \ref{p-tom3}.

\begin{thm}
Suppose $h$  is  a harmonic $K$-quasiconformal mapping
from the unit ball $\mathbb{B}^n$ onto  a
bounded convex  domain  $D=h(\mathbb{B}^n)$,  with  $K< 3^{n-1}$.
Then       $h$  is  co-Lipschitz on  $\mathbb{B}^n$.
\end{thm}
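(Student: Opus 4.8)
The plan is to combine the two main technical results already established and observe that the hypotheses assemble to give exactly the input required by Theorem \ref{thm.space2}. The final statement is a special case, so the proof should be short: the real work was done in proving that $K < 3^{n-1}$ forces the Jacobian to be nonvanishing, and in setting up the non-zero Jacobian closed family machinery.

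First I would fix the dimension $n$ and the quasiconformality constant $K$ with $K < 3^{n-1}$, and consider the class $\mathcal{F}_K$ of all $K$-quasiconformal harmonic maps between domains in $\mathbb{R}^n$. The earlier remark after Definition \ref{dfn3.2} states that the family of harmonic maps is RHTC-closed and that, for fixed $K \geq 1$, the $K$-quasiconformal members of an RHTC-closed family again form an RHTC-closed family; so $\mathcal{F}_K$ is RHTC-closed. The next step is to verify the non-zero Jacobian property for $\mathcal{F}_K$. By Theorem \ref{p-tom3}, any harmonic quasiconformal map with $K_O < 3^{n-1}$ has nowhere-vanishing Jacobian. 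Since $K_O(f) \leq K < 3^{n-1}$ for every $f \in \mathcal{F}_K$, every member has nonvanishing Jacobian, and therefore $\mathcal{F}_K$ is a non-zero Jacobian closed family in the sense of Definition \ref{dfn3.1}.

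Having identified $\mathcal{F}_K$ as a non-zero Jacobian closed family of harmonic maps, I would then note that the given map $h$ lies in $\mathcal{F}_K$: it is harmonic, $K$-quasiconformal with $K < 3^{n-1}$, and maps $\mathbb{B}^n$ onto the bounded convex domain $D = h(\mathbb{B}^n)$. Thus $h$ satisfies all the hypotheses of Theorem \ref{thm.space2}, which concludes that $h$ is co-Lipschitz on $\mathbb{B}^n$. This finishes the argument.

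The potential obstacle is not analytic but bookkeeping: one must make sure that the closure property under completeness (uniform limits on compact sets) does not produce a non-constant limit map with $K_O \geq 3^{n-1}$ or with a vanishing Jacobian. This is handled automatically, since uniform limits of $K$-quasiconformal maps are $K$-quasiconformal (the constant is preserved in the limit by Theorem \ref{nrm}), so any non-constant limit again lies in $\mathcal{F}_K$ and hence, by Theorem \ref{p-tom3} applied to the limit, has nonvanishing Jacobian. Consequently $\mathcal{F}_K$ is genuinely closed under all four operations of Definition \ref{dfn3.2} while retaining the non-zero Jacobian property, so the application of Theorem \ref{thm.space2} is legitimate and no further estimate is needed.
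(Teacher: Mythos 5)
Your proposal is correct and follows the paper's own route exactly: the paper derives this theorem by combining Theorem \ref{thm.space2} with Theorem \ref{p-tom3}, which is precisely what you do. You merely make explicit the bookkeeping the paper leaves implicit, namely that the family of $K$-quasiconformal harmonic maps with $K<3^{n-1}$ is RHTC-closed (via the remark after Definition \ref{dfn3.2} and Theorem \ref{nrm}) and has nowhere-vanishing Jacobians by Theorem \ref{p-tom3}, hence is non-zero Jacobian closed.
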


{}
\end{document}